\title[Sparser variance for primes in arithmetic progression]
      {Sparser variance for primes in arithmetic progression}
\author[R.\ C.\  Baker]{Roger Baker}
\address{Department of Mathematics, 
         Brigham Young University, 
         Provo UT, USA.}
\email{baker@math.byu.edu}
\author[T.\ Freiberg]{Tristan Freiberg}
\address{Department of Pure Mathematics, 
         University of Waterloo, 
         Waterloo ON, CANADA.}
\email{tfreiberg@uwaterloo.ca}
\date{\today}
\begin{document}


\begin{abstract}
We obtain an analog of the Montgomery--Hooley asymptotic formula 
for the variance of the number of primes in arithmetic 
progressions.
In the present paper the moduli are restricted to the sequences of 
integer parts $[F(n)]$, where $F(t) = t^c$ ($c > 1$, 
$c \not\in \NN$) or $F(t) = \exp\big((\log t)^{\gamma}\big)$ 
($1 < \gamma < 3/2$).
\end{abstract}

\subjclass[2010]{Primary 11N13, Secondary 11P55}

\keywords{Variance for primes in arithmetic progressions, 
Hardy--Littlewood method, exponential sums with integer part 
functions.}

\maketitle


\section{Introduction}
 \label{sec:1}
 
Let $F$ be a real differentiable function on $(1,\infty)$ with the 
property that 
\[
 F(y) \ge 2, 
  \quad 
   F'(y) \ge 1
    \quad 
     (y \ge y_0(F)).
\]
We write 
\[
 f(y) = [F(y)].
\]
We are concerned with the remainders 
\[
 E(x; h, \ell)
  = 
   \sums[p \le x][p \equiv \ell \bmod h] \log p
    -
     \frac{x}{\phi(h)}, 
      \quad 
       (\ell, h) = 1, 
\]
where $x$ is large and the moduli $h$ are restricted to the values 
$f(k)$.
Here and below, $p$ denotes a prime number.
Let $V_F(x,y)$ denote the variance
\[
 V_F(x,y)
  = 
   \sum_{y_0(F) < k \le y} 
    F'(k)
     \sums[\ell = 1][(\ell,f(k)) = 1]^{f(k)}
      E(x; f(k), \ell)^2.
\]
When $F(k) = f(k) = k$, the Montgomery--Hooley theorem 
\cite{HOO75, MON70} states that for $1 \le y \le x$, 
\[
 V_F(x,y) 
  = 
   xy\log y 
    + 
     c_0xy 
      + 
       O\big(x^{1/2}y^{3/2} + x^2(\log x)^{-A}\big).
\]
Here and below, $A$ denotes an arbitrary positive constant; we 
take $A > 1$.
(Implied constants depend on $A$ throughout: dependencies on 
constants such as $c$ are indicated in context.)
The constant $c_0$ can be given explicitly.
This asymptotic formula was generalized by Br\"udern and Wooley 
\cite{BW11} to the case where $F = f$ is an integer-valued 
polynomial of degree $\ge 2$ with positive leading coefficient. 
They found that for $1 \le F(y) \le x$, 
\begin{equation}
 \label{eq:1.1}
  V_F(x,y) 
   = 
    xf(y)\log f(y)
     + 
      C(f)xf(y)
       +
        O\big(x^{1/2}f(y)^{3/2} + x^2(\log x)^{-A}\big).             
\end{equation}

In the present paper, we give two further variants of the 
Montgomery--Hooley theorem.

\begin{theorem}
 \label{thm:1}
Let $F(k) = k^c$, where $c > 1$ and $c \not\in \NN$.
Then \eqref{eq:1.1} holds for $F(y) \le x$, with $C(f)$ replaced 
by a constant $C$ independent of $f$.
\end{theorem}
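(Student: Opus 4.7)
The strategy is to reduce the $F'$-weighted variance $V_F(x,y)$ over the sparse sequence of moduli $f(k) = [k^c]$ to the classical Montgomery--Hooley sum over every integer modulus $h \le f(y)$, namely
\[
 \sum_{h \le f(y)}V(x,h),
  \qquad
   V(x,h) := \sums[\ell = 1][(\ell,h) = 1]^{h} E(x;h,\ell)^2.
\]
The motivation is that the weight $F'(k) = ck^{c-1}$ in the definition of $V_F$ exactly matches the local density of the image set $\{[k^c]\}_k$ in $\NN$, so the sparse weighted sum should be asymptotically equal to the unweighted integer-modulus sum, and the sequence-dependent constant $C(f)$ in \eqref{eq:1.1} should collapse to an absolute constant $C$, namely the classical Hooley constant.

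To implement the reduction, I would expand each inner variance by orthogonality of Dirichlet characters,
\[
 V(x, f(k))
  =
   \frac{1}{\phi(f(k))} \sum_{\substack{\chi \bmod f(k) \\ \chi \ne \chi_0}} |\psi(x,\chi)|^2
    +
     O\big((\log x)^2\big),
\]
and reduce every induced character to its primitive inducing character $\chi$ modulo $d$, $d \mid f(k)$. After swapping the $k$- and $d$-summations, the expression takes the shape
\[
 V_F(x,y)
  =
   \sum_{d \le f(y)} W_d \sum_{\substack{\chi \bmod d \\ \chi \text{ primitive}}} |\psi(x,\chi)|^2
    +
     \text{error},
 \quad
 W_d = \sum_{\substack{y_0(F) < k \le y \\ d \mid f(k)}}\frac{F'(k)}{\phi(f(k))}.
\]
The analogous weight in the Montgomery--Hooley sum over all integer moduli is $W_d^{\mathrm{MH}} = \sum_{h \le f(y),\,d \mid h}\phi(h)^{-1}$. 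The central analytic task is to prove
\[
 W_d
  =
   W_d^{\mathrm{MH}}
    +
     O\big(d^{-1-\delta}(\log x)^{B}\big)
\]
for some $\delta > 0$, uniformly for $d \le f(y)$. Once that is in hand, substitution recovers the Montgomery--Hooley sum plus an error controlled by the hybrid large sieve (for high-conductor characters) and Siegel--Walfisz (for low-conductor characters), keeping it within the admissible $x^{1/2}f(y)^{3/2} + x^2(\log x)^{-A}$.

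The required estimate for $W_d$ is an equidistribution statement for the Piatetski--Shapiro sequence in residue classes. Writing $d \mid [k^c]$ as $\{k^c/d\} < 1/d$, the Erd\H{o}s--Tur\'an inequality reduces the problem to bounding the Weyl sums
\[
 \sum_{y_0(F) < k \le y} e\!\left(\tfrac{m k^c}{d}\right)
  \qquad (m \ge 1)
\]
nontrivially, which is accomplished by van der Corput's method for $1 < c < 2$ and by refined exponent pair or Bombieri--Iwaniec estimates for larger $c$. The hypothesis $c \not\in \NN$ is essential here, since for integer $c$ the sum $\sum_k e(mk^c/d)$ reduces to a Gauss-type sum modulo $d$ and lacks the smooth oscillation that these methods exploit.

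The principal obstacle is controlling these Weyl sums with enough uniformity in $d$, particularly when $d$ is comparable in size to $f(y)$: there the frequencies $m/d$ can lie very close to integers and standard exponent-pair bounds barely beat the trivial estimate. Once the power-saving equidistribution for $W_d$ is in place, the remainder of the argument follows the Br\"udern--Wooley template \cite{BW11}: partial summation converts pointwise Weyl savings into a bound for $|W_d - W_d^{\mathrm{MH}}|$, the absolute constant $C$ emerges from the classical evaluation of $\sum_{h \le f(y)} 1/\phi(h)$, and the final error inherits the Montgomery--Hooley shape.
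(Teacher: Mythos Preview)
Your character-theoretic reduction is a natural idea, and the intuition that the weight $F'(k)$ compensates for the sparsity is exactly right---the paper does recover the classical Hooley constant. But the proposed route has a genuine gap: the estimate $W_d = W_d^{\mathrm{MH}} + O(d^{-1-\delta}(\log x)^B)$ \emph{uniformly for $d \le f(y)$} is out of reach. To prove it you must count $k \le y$ with $d \mid [k^c]$, which via Erd\H{o}s--Tur\'an reduces to bounding $\sum_{k \le y} e(mk^c/d)$ nontrivially. When $d$ is comparable to $f(y) = y^c$ the phase has $k$-derivative $\asymp mck^{c-1}/d \asymp m/y$; for bounded $m$ this is far below $1$, so van der Corput and every exponent-pair refinement return only the trivial bound. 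You acknowledge this as the ``principal obstacle'', but it is not a technicality to be smoothed over: the paper's own equidistribution input (Lemma~\ref{lem:5}) reaches only moduli $q \le (\log x)^{C_5}$, many orders of magnitude below the $d \le f(y)$ you need, and there is no known way to push it further. Without it, the large-$d$ contribution to your character sum cannot be matched to the Montgomery--Hooley weight, and the main term is lost.

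The paper avoids this by a different architecture. It first restricts to $k > y_1$ with $F(y_1) = x(\log x)^{-B}$, disposing of smaller $k$ by the Barban--Davenport--Halberstam-type bound of Lemma~\ref{lem:1}. On the remaining range it opens the square and writes the off-diagonal part $S_0$ as a circle-method integral $\int_0^1 T(\alpha)|U(\alpha)|^2\,\dd\alpha$, with $U$ the exponential sum over primes and $T(\alpha) = \sum_k F'(k)\sum_h e(\alpha h f(k))$. On minor arcs the bilinear sum $\sum_m\sum_k e(\alpha m f(k))$ is handled by Lemma~\ref{lem:4}; on major arcs equidistribution of $f(k)$ modulo $q$ is required only for $q \le (\log x)^{C_5}$, which Lemma~\ref{lem:5} supplies. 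The constant $C$ then emerges from an explicit singular-series computation in Section~\ref{sec:5}. The point is that the circle method trades your uniform-in-$d$ equidistribution requirement for a bilinear exponential-sum estimate in which the large modulus is replaced by a Diophantine condition on $\alpha$---and that estimate \emph{is} available.
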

\noindent
The constant $C$ is evaluated in Section \ref{sec:5} (see 
\eqref{eq:5.10}).

\begin{theorem}
 \label{thm:2}
Let $F(k) = \exp\big((\log k)^{\gamma}\big)$, where 
$1 < \gamma < 3/2$.
Let $C_1 > 1/(3 - 2\gamma)$.
For $F(y) \le x$ we have, with $C$ as in Theorem \ref{thm:1},
\begin{align*}
 & 
  V_F(x,y) 
   -
    V_F(x,\exp((\log \log x)^{C_1}))
 \\
 & 
  \hspace{30pt}
 =
   xf(y)\log f(y)
     + 
      Cxf(y)
       +
        O\big(x^{1/2}f(y)^{3/2} + x^2(\log x)^{-A}\big).
\end{align*}
\end{theorem}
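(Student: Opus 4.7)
The plan is to follow the architecture of the proof of Theorem~\ref{thm:1}, modifying each step to accommodate the sub-polynomial growth of $F(k) = \exp((\log k)^\gamma)$. The truncation $V_F(x,Y)$ with $Y = \exp((\log\log x)^{C_1})$ is introduced to excise the range of $k$ in which the available exponential-sum estimates for the sparse sequence $\{f(k)\}$ fail; the hypothesis $C_1 > 1/(3-2\gamma)$ is the threshold at which those estimates become effective.

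First I would expand the square defining $E(x;f(k),\ell)^2$ and use orthogonality of additive characters modulo $q=f(k)$ to convert
\[
 \sum_{\substack{\ell=1 \\ (\ell,f(k))=1}}^{f(k)} E(x; f(k), \ell)^2
  \longrightarrow
   \sum_{\substack{a=1 \\ (a,f(k))=1}}^{f(k)} |T(a/f(k))|^2,
\]
where $T(\alpha)$ is the usual exponential sum over primes $p \le x$ minus its expected main term. Weighting by $F'(k)$, summing over $Y < k \le y$, and partitioning $[0,1]$ into major and minor arcs in the Hardy--Littlewood manner reduces the problem to extracting a main term on the major arcs and saving a sufficient power of $\log x$ on the minor arcs.

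On the major arcs, the truncation is decisive: every relevant modulus satisfies $f(k) \ge F(Y) = \exp((\log\log x)^{\gamma C_1})$, which exceeds $(\log x)^{A'}$ for any fixed $A'$, so the Siegel--Walfisz theorem applies uniformly. A routine asymptotic evaluation, paralleling Theorem~\ref{thm:1}, produces
\[
 x\sum_{Y < k \le y} F'(k)\log f(k) + Cx\sum_{Y < k \le y} F'(k) + O\bigl(x^2(\log x)^{-A}\bigr),
\]
and partial summation converts this to $xf(y)\log f(y) + Cxf(y)$ with admissible error, the contribution at $k = Y$ being negligible since $F(Y)\log F(Y) = (\log x)^{o(1)}$. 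The constant $C$ coincides with that of Theorem~\ref{thm:1} because the local factors at each prime depend only on the modulus.

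The main obstacle is the minor-arc estimate. I would combine a Vinogradov-type pointwise bound $\sup_{\alpha \in \mathfrak{m}}|T(\alpha)| \ll_B x(\log x)^{-B}$ with a large-sieve / dispersion argument controlling near-coincidences $a/f(k) \approx a'/f(k')$, the latter ultimately resting on cancellation in the exponential sum
\[
 \sum_k F'(k) \, e\bigl(\alpha f(k)\bigr).
\]
For $F(k) = \exp((\log k)^\gamma)$, van der Corput / exponent-pair methods yield cancellation in this sum, but the strength of the resulting bound degrades with the ratio of $F'(k)$ to $F(k)/k$; a careful accounting shows that the bound is strong enough to absorb the $F'(k)$ weight into $O(x^2(\log x)^{-A})$ only when $\log k \gg (\log\log x)^{1/(3-2\gamma)}$, which is exactly the hypothesis on $C_1$. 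Carrying this minor-arc bound through, together with the usual secondary contribution $O(x^{1/2}f(y)^{3/2})$ from small moduli or off-diagonal terms in the major-arc analysis, completes the argument.
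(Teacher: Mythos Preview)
Your outline has a real gap and also diverges structurally from the paper's argument.

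First, the Siegel--Walfisz remark is backwards. Siegel--Walfisz gives useful information about $E(x;q,\ell)$ only for $q\le (\log x)^{A'}$; the fact that every $f(k)$ with $k>Y$ exceeds any fixed power of $\log x$ is precisely why Siegel--Walfisz is \emph{useless} for these moduli, not why it ``applies uniformly''. The major-arc input in the paper is Vaughan's approximation $|U(b/q+\beta)|^2=\mu(q)^2\phi(q)^{-2}|v(\beta)|^2+O(\cdots)$ for the prime exponential sum at rationals with \emph{small} denominator $q\le P=(\log x)^{C_5}$; the moduli $f(k)$ never enter the major-arc denominators at all.

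Second, the architecture you propose---passing to Farey points $a/f(k)$, then a large-sieve/dispersion argument for coincidences $a/f(k)\approx a'/f(k')$---is not what the paper does and would be hard to carry out, since one would need to control the spacing of such points uniformly over the sparse set $\{f(k)\}$. The paper instead splits the $k$-range at $y_1$ defined by $F(y_1)=x(\log x)^{-B}$. For $y_1<k\le y$ it writes the off-diagonal contribution as a single integral $S_0=\int_0^1 T(\alpha)|U(\alpha)|^2\,d\alpha$ with $T(\alpha)=\sum_k F'(k)\sum_{h\le x/f(k)} e(\alpha h f(k))$, and bounds $T$ on the minor arcs via the double exponential sum $\sum_m\sum_k e(\alpha m f(k))$ (Lemma~\ref{lem:4}); this is where the constraint $\log K>(\log\log x)^{C_1}$ with $C_1>1/(3-2\gamma)$ is used. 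The major-arc main term is then massaged (Lemma~\ref{lem:5}, Lemma~\ref{lem:6}) into $\Theta(H)=\sum_{h\le H}\frac{h}{\phi(h)h}(H-h)^2$, whose $O(H^{1/2})$ error is the source of $x^{1/2}f(y)^{3/2}$. The remaining range $Y<k\le y_1$ contributes $\ll x^2(\log x)^{-A}$ directly from a Barban--Davenport--Halberstam type bound for the sparse moduli (Lemma~\ref{lem:2}), with no circle method needed there. Your sketch omits this split and the Bombieri--Vinogradov-type input entirely.
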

\noindent%
If we knew more about either Siegel zeros or exponential sums, we 
would not have to omit small moduli in Theorem \ref{thm:2}; see 
\cite[Section 6]{BAK14}.

\vspace{1em}
\noindent%
{\bfseries Acknowledgements.}
The arguments in Sections \ref{sec:4} and \ref{sec:5} are adapted 
from \cite{BW11} with some notable differences.
We thank Trevor Wooley for insightful comments about these 
differences. 
Thanks are also due from R.\ B.\ to the Department of Pure 
Mathematics, University of Waterloo for hospitality, and to the 
Simons Foundation for a Collaboration Grant.


\section*{Notation}

As is customary, $\phi$ denotes Euler's totient function, $\mu$ 
denotes the M\"obius function, $e(\theta)$ abbreviates 
$\e^{2\pi i \theta}$, and $\|t\| = \min(\{t\}, 1 - \{t\})$, where 
$\{t\} = t - [t]$ denotes the fractional part of $t$. 
Throughout, we regard the quantities $c$, $\gamma$ and $A$ as 
fixed and independent of all other quantities: we only assume 
that $c > 1$, $c \not\in \NN$, $1 < \gamma < 3/2$, and $A > 1$ 
(arbitrarily large).
We regard $B$ as fixed, but sufficiently large in terms of $A$ and 
$c$ (respectively, $A$ and $\gamma$) in the case $F(t) = t^c$ 
(respectively, $F(t) = \exp\big((\log t)^{\gamma}\big)$). 
We write $C_1,C_2,\ldots$ for `large' positive constants and 
$c_1,c_2,\ldots$ for `small' positive constants: each $C_i$ 
may depend on $c$, $\gamma$, $A$, $B$, $C_1,\ldots,C_{i - 1}$ and 
$c_1,\ldots,c_{i - 1}$ (indicated in context); likewise for each 
$c_i$.
We view $x$ as a real parameter tending to infinity, and write 
$U \ll V$, $V \gg U$, or $U = O(V)$ to denote that 
$|U| \le \kappa|V|$ for all sufficiently large $x$, where $\kappa$ 
is a constant which may depend on $A$, as well as other fixed 
quantities (indicated in context).
We write `$U \asymp V$' for `$U \ll V$ and $V \ll U$'.


\section{Some lemmas}
 \label{sec:2}
 
Most of these preliminary results come from \cite{BAK14}.
Whether $F$ be as in Theorem \ref{thm:1} or Theorem \ref{thm:2}, 
let us write
\begin{equation}
 \label{eq:2.1}
  S_Q 
   =
    \{[F(n)] : Q < [F(n)] \le 2Q\}.
\end{equation}

\begin{lemma}
 \label{lem:1}
Let $F$ be as in Theorem \ref{thm:1}.
For $Q \in \NN$ and $S_Q$ as in \eqref{eq:2.1}, we have 
\begin{equation}
 \label{eq:2.2}
  \sum_{q \, \in \, S_Q}
   \sums[\ell = 1][(\ell,q) = 1]^{q} 
    E(x;q,\ell)^2
     \ll 
      \frac{|S_Q|x^2}{Q\psL^{2A}}
\end{equation}
provided that $Q \le x\psL^{-B}$.
Here $B$ is a positive constant depending on $A$ and $c$.
\end{lemma}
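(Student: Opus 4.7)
The approach is the Hardy--Littlewood circle method, combined with the exponential-sum machinery for Piatetski--Shapiro-type sequences developed in \cite{BAK14}. First I would apply an additive-character expansion to rewrite
\[
 \sum_{\substack{\ell = 1 \\ (\ell,q) = 1}}^{q} E(x;q,\ell)^2
\]
in terms of $|\theta(x, a/q)|^2$, where $\theta(x,\alpha) = \sum_{p \le x}(\log p) e(\alpha p)$ with its expected main term subtracted. Summed over $q \in S_Q$, the task reduces to controlling these values at the Farey fractions $a/q$ with $q \in S_Q$ and $(a,q)=1$.

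These fractions are split into major and minor arcs in the usual fashion. On the major arcs---those close to some reduced $b/r$ with $r \le (\log x)^{B'}$ for a suitable $B' < B$---Siegel--Walfisz gives cancellation of quality $(\log x)^{-2A}$ in $\theta$, and counting the Farey fractions arising from $q \in S_Q$ near each $b/r$ yields a contribution matching the claimed bound, with the factor $|S_Q|/Q$ reflecting the density $|S_Q| \asymp Q^{1/c}$ of $\{[n^c]\}$ in $(Q, 2Q]$.

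The minor arcs are handled by an $L^\infty$-type bound for the Piatetski--Shapiro exponential sum $\sum_{Q < [n^c] \le 2Q} e(\alpha [n^c])$, uniform over $\alpha$ that are badly approximable by rationals of small denominator. Such estimates, based on Type~I / Type~II and van~der~Corput decompositions, are the crucial technical input supplied by \cite{BAK14}; they translate into genuine cancellation in the prime sum once combined with a Vaughan-type identity for $\theta$.

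The main obstacle is precisely this minor-arc exponential-sum estimate: it must be uniform in $Q \le x(\log x)^{-B}$ and strong enough to deliver the required polylogarithmic saving once $B$ is chosen sufficiently large in terms of $A$ and $c$. With that ingredient in hand, assembling the major- and minor-arc contributions and invoking the density bound $|S_Q| \asymp Q^{1/c}$ produces the stated estimate.
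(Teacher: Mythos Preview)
The paper's own ``proof'' of this lemma is a one-line citation: it is exactly \cite[Theorem~1.1]{BAK14}, quoted as a black box. So there is nothing in the present paper to compare your argument against; your proposal is really an attempt to reconstruct the argument of the cited reference rather than of this paper.

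As a sketch of the method of \cite{BAK14} your outline is broadly in the right spirit --- circle method, major/minor dissection, Siegel--Walfisz on the major arcs, and exponential-sum input of Piatetski--Shapiro type on the minor arcs --- and you are right that the heart of the matter is a uniform minor-arc saving that forces $B$ to be large in terms of $A$ and $c$. One point in your sketch is slightly garbled, however: on the minor arcs the relevant object is a \emph{double} sum of the shape $\sum_{m}\sum_{k} e(\alpha m f(k))$ (compare Lemma~\ref{lem:4} of the present paper), and the saving comes from the Piatetski--Shapiro structure in the $k$-variable; the Vaughan identity does not enter in bounding $\sum_{Q<[n^c]\le 2Q} e(\alpha[n^c])$, since that sum runs over moduli, not over primes. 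The Vaughan/Vinogradov machinery, if used at all, would act on the prime exponential sum $U(\alpha)$ separately. With that clarification your plan matches the approach underlying the cited result; but for the purposes of this paper the correct answer is simply to invoke \cite[Theorem~1.1]{BAK14}.
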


\begin{proof}
\cite[Theorem 1.1]{BAK14}.
\end{proof}

\begin{lemma}
 \label{lem:2}
Let $F$ be as in Theorem \ref{thm:2}.
For $Q \in \NN$ and $S_Q$ as in \eqref{eq:2.1}, we have 
\eqref{eq:2.2} provided that 
\[
 \exp\big((\log \sL)^{C_2}\big)
  \le 
   Q
    \le 
     x\psL^{-B},
\]
where $C_2 > \gamma/(3 - 2\gamma)$.
Here $B$ is a positive constant depending on $A$ and $\gamma$.
The implied constant \textup{(}in \eqref{eq:2.2}\textup{)} depends 
on $\gamma$, $C_2$ and $A$.%
\end{lemma}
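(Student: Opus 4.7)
The plan is to invoke the appropriate theorem from \cite{BAK14}, which treats variance bounds for primes in progressions to moduli drawn from $S_Q$ for both forms of $F$ considered in this paper. Lemma~\ref{lem:1} is the version in the polynomial-like regime; Lemma~\ref{lem:2} is its companion for $F(t) = \exp\big((\log t)^{\gamma}\big)$. The additional lower bound $Q \ge \exp\big((\log \sL)^{C_2}\big)$ reflects the weakness of the exponential sum estimates available for this much slower-growing $F$, together with the need to keep the modulus large enough that Siegel--Walfisz still produces the required savings $\sL^{-2A}$ uniformly in $q \in S_Q$.

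In outline, the underlying argument mirrors the proof of \cite[Theorem 1.1]{BAK14}. Via orthogonality of additive characters, transfer the inner sum over residues into a sum over Dirichlet characters involving $\psi(x,\chi) - \delta_\chi x$, then apply a hybrid large sieve. This reduces matters to estimating an exponential sum of the form $\sum_n e\big(\alpha F(n)\big)$ on the dyadic interval corresponding to $Q < F(n) \le 2Q$, after smoothing the constraint $[F(n)] = q$ by a Fourier expansion of the indicator of an interval of length one. The resulting $\alpha$-integral is dissected into major and minor arcs: the major arcs are handled by Siegel--Walfisz, and the minor arcs call for Weyl-type estimates.

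The main obstacle is the minor-arc bound. For $F(t) = \exp\big((\log t)^{\gamma}\big)$ one has $F^{(k)}(t) \asymp F(t)(\log t)^{k(\gamma - 1)}/t^k$ to leading order, so the savings produced by van der Corput's $k$th-derivative test are substantially weaker than for $F(t) = t^c$. Quantitatively, tracking the dependence of these savings on $Q$ shows that the relevant exponential sum bound becomes admissible precisely when $Q \ge \exp\big((\log \sL)^{C_2}\big)$ with $C_2 > \gamma/(3 - 2\gamma)$; this is the threshold appearing in the statement, and it is the source of the small-modulus exclusion that propagates into Theorem~\ref{thm:2}.
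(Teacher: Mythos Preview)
Your approach is essentially the same as the paper's: the paper's entire proof is the single citation ``\cite[Theorem 1.2]{BAK14}'', and you likewise invoke the relevant theorem from \cite{BAK14}. Your additional outline of the method behind that reference is not required for the lemma as stated here, though it is reasonable context; just be sure to point to Theorem~1.2 specifically rather than only to the paper as a whole.
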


\begin{proof}
\cite[Theorem 1.2]{BAK14}.
\end{proof}

\begin{lemma}
 \label{lem:3}
Let $2 \le N \le N_1 \le 2N$. 
\\
(i)
Let 
$
 F(y) 
  =
   \exp\big((\log y)^{\gamma}\big)
$.
%
Let $0 < \beta < F(N)^{c_2}$ with $0 < c_2 < \gamma - 1$, and 
\begin{equation}
 \label{eq:2.3}
 \beta F'(N) \ge \frac{1}{2}.
\end{equation}
Then 
\[
 \bigg|
  \sum_{N < n \le N_1}
   e\big(\beta F(n)\big)
 \bigg|
  \le 
   C_3 
    N
     \exp\big(-c_3(\log N)^{3 - 2\gamma}\big)
\]
where $C_3,c_3$ depend on $\gamma$ and $c_2$.
\\
(ii)
Let 
$
 F(y) = y^c
$.
%
Let $0 < \beta < N$ and suppose that \eqref{eq:2.3} holds.
Then 
\[
 \bigg|
  \sum_{N < n \le N_1}
   e\big(\beta F(n)\big)
 \bigg|
  \le 
   C_4 
    N^{1 - c_4}
\]
where $C_4,c_4$ depend on $c$ and $c_4 < 1/20$.
\end{lemma}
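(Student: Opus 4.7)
My plan is to estimate $S := \sum_{N < n \le N_1} e(\beta F(n))$ in both parts by van der Corput's method. In each case the strategy is to compute the size of $\beta F^{(k)}$ on $(N, N_1]$ and feed it into either a fixed $k$-th derivative test or an iterated A-/B-process, with $k$ chosen according to how fast $F$ grows.

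For part (ii), $F^{(k)}(y) = c(c - 1) \cdots (c - k + 1)\, y^{c - k}$, so $\beta F^{(k)}(n) \asymp_c \beta N^{c - k}$ uniformly on $(N, N_1]$. The condition \eqref{eq:2.3} together with $\beta < N$ confines $\beta F'(N) \asymp \beta N^{c - 1}$ to the range $[\tfrac12, N^c)$. I would fix an integer $k = k(c) \ge 3$ and apply the van der Corput $k$-th derivative test, obtaining a bound of the form
\[
 N (\beta N^{c - k})^{1/(2^k - 2)} + N^{1 - 2^{2 - k}} (\beta N^{c - k})^{-1/(2^k - 2)}.
\]
Optimising $k$ across the admissible range of $\beta$ then yields the power saving $N^{1 - c_4}$ with some $c_4 = c_4(c) > 0$, which one can arrange to be less than $1/20$.

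Part (i) is more delicate because $F(y) = \exp((\log y)^{\gamma})$ grows slowly. A short induction shows that $F^{(k)}(y) = F(y)\, y^{-k}\, P_k(\log y)$, where $P_k$ is a polynomial with leading term $\gamma^k (\log y)^{k(\gamma - 1)}$; hence $\beta F^{(k)}(n) \asymp_k \beta F(N) (\log N)^{k(\gamma - 1)}/N^k$ on the interval. A single fixed-$k$ derivative test does not give the desired exponential saving because the factor $F(N)$ is so large, so the correct strategy is to use an iterated van der Corput process, with the number of iterations $K = K(N)$ growing slowly with $N$. Under the budget imposed by $\beta < F(N)^{c_2}$ (with $c_2 < \gamma - 1$) and \eqref{eq:2.3}, roughly $(\log N)^{3 - 2\gamma}$ iterations can be afforded before the hypotheses degrade; each round contributes a modest saving, and these compound to the factor $\exp(-c_3 (\log N)^{3 - 2\gamma})$ in the lemma. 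The exponent $3 - 2\gamma$ is precisely the critical exponent which the assumption $\gamma < 3/2$ keeps positive.

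The main obstacle is the bookkeeping in part (i): each application of an A- or B-process changes the interval length, the amplitude, and the effective function whose exponential sum is being bounded, so one has to verify that analogues of \eqref{eq:2.3} and the bound $\beta < F(\cdot)^{c_2}$ essentially persist through each step in order for the iteration to continue. Bounds of precisely this shape for both $F(y) = y^c$ and $F(y) = \exp((\log y)^{\gamma})$ are by now standard in the theory of exponential sums with integer-part functions --- they underlie the Bombieri--Vinogradov-type inputs quoted as Lemmas \ref{lem:1} and \ref{lem:2} --- so in practice the cleanest implementation is to match parameter ranges with those references (in particular \cite{BAK14} and its sources) and quote the relevant estimate.
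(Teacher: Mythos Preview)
Your proposal is correct and lands in the same place as the paper: the paper's entire proof of Lemma~\ref{lem:3} is the citation \cite[Lemma~2.2]{BAK14}, and you conclude by recommending exactly that. Your sketch of the underlying machinery --- a fixed-order van der Corput derivative test for $F(y)=y^c$, and an iterated Weyl--van der Corput process with roughly $(\log N)^{3-2\gamma}$ steps for $F(y)=\exp((\log y)^{\gamma})$ --- is an accurate summary of what that reference does.
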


\begin{proof}
\cite[Lemma 2.2]{BAK14}.
\end{proof}

For the remainder of the paper, $B$ and $c_4$ are as in 
Lemmas \ref{lem:1}--\ref{lem:3} and $C_5,C_6,C_7$ are constants 
satisfying 
\begin{equation}
 \label{eq:2.4}
 C_5 \ge 8A + 1 + B, 
  \quad 
   C_6 \ge C_5 + 12A + 2, 
    \quad 
     C_7 \ge \frac{8A + 2}{c_4}.
\end{equation}

\begin{lemma}
 \label{lem:4}
Let $P = \psL^{C_5}$, $R = x\psL^{-C_6}$.
Let $\alpha > 0$ and suppose there is no rational number $a/q$, 
$(a,q) = 1$, $1 \le q \le P$ satisfying 
\[
 \bigg| \alpha - \frac{a}{q} \bigg|
  \le 
   \frac{1}{qR}.
\]
Suppose that either 

\vspace{0.5em}

\noindent%
(a)
$F(y) = y^c$ 
\textup{(}$c > 1$, $c \not\in \NN$\textup{)}, or    

\vspace{0.5em}

\noindent%
(b) 
$F(y) = \exp\big((\log y)^{\gamma}\big)$ 
\textup{(}$1 < \gamma < 3/2$\textup{)}.

\vspace{0.5em}

\noindent%
Let $\psL^{C_7} \le K \le K_1 \le 2K$ and $M \ge 1/2$, 
$MF(K) \le 2x$.
Suppose further in case (b) that 
\begin{equation}
 \label{eq:2.5}
 \log K > (\log \sL)^{C_1}, 
  \quad 
   C_1 > 1/(3 - 2\gamma).
\end{equation}
Then 
\[
   \sum_{M < m \le 2M}
    \sums[K < k \le K_1][mf(k) \le x]
     e\big(\alpha mf(k)\big)
      \ll
       \frac{Kx}{F(K)\psL^{4A}}.
\]
The implied constant depends on $c,A$ in case (a) and 
$\gamma, C_1, A$ in case (b).
\end{lemma}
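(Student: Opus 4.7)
The plan is to handle $T$ as a bilinear exponential sum: apply Cauchy--Schwarz to reduce to an off-diagonal estimate, then invoke the Diophantine condition on $\alpha$ via Dirichlet's theorem, which furnishes coprime integers $a,q$ with $P<q\le R$ and $|\alpha-a/q|\le 1/(qR)$.

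Since $F'(y)\gg 1$ on the summation range, $f(k)=[F(k)]$ is injective on $(K,K_1]$, so the inner sum can be reindexed as one over $n=f(k)$, ranging in a set $N_K\subset(F(K),F(2K)]$ of at most $K$ integers. Cauchy--Schwarz in the $m$-variable, followed by expansion and interchange of summation, yields
\[
 |T|^2\ll M\sum_{n_1,n_2\in N_K}\min\!\bigl(M,\|\alpha(n_1-n_2)\|^{-1}\bigr).
\]
The diagonal $n_1=n_2$ contributes $\ll M^2 K$, which is acceptable thanks to $K\ge\sL^{C_7}$ and $MF(K)\le 2x$. The off-diagonal I would handle via the classical bound $\sum_{1\le|d|\le D}\min(M,\|\alpha d\|^{-1})\ll(D/q+1)(M+q\log q)$ with $D\ll F(K)$ and $q>\sL^{C_5}$; the constants $C_5,C_6,C_7$ in \eqref{eq:2.4} are calibrated precisely so that each resulting term is absorbed into $K^2x^2/(F(K)^2\sL^{8A})$, after using $MF(K)\le 2x$.

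The chief technical obstacle is controlling the multiplicity $N(d)=\#\{(n_1,n_2)\in N_K^2:n_1-n_2=d\}$ for $d\ne 0$: the trivial bound $N(d)\le K$ is too wasteful when $F(K)/K$ is large (as occurs for $F(y)=y^c$ with $c$ large), and one must exploit the approximate linearity $f(k+j)-f(k)\approx jF'(K)$ of $f$ on dyadic intervals, together with the convexity of $F$, to show $N(d)=O(1)$ generically. In case (b), the hypothesis $\log K>(\log\sL)^{C_1}$ with $C_1>1/(3-2\gamma)$ is engineered precisely so that, if needed, Lemma \ref{lem:3}(i)'s saving $\exp(-c_3(\log K)^{3-2\gamma})$ dominates any negative power of $\sL$; this permits one to substitute Lemma \ref{lem:3} for the geometric-series bound at intermediate steps where the latter falls short, after bridging the gap between $[F(k)]$ and $F(k)$ by Fourier-expanding $e(-\beta\{F(k)\})$ in $e(hF(k))$ and applying Lemma \ref{lem:3} to each truncated frequency.
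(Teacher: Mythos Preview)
The paper does not actually prove this lemma: the proof reads, in its entirety, ``This follows at once from \cite[Theorem~2.5]{BAK14}.'' So there is no in-paper argument to compare against. That said, your proposed route has a genuine gap.

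After Cauchy--Schwarz you reach $M\sum_{n_1,n_2}\min(M,\|\alpha(n_1-n_2)\|^{-1})$ and propose to bound the off-diagonal via $\sum_{1\le|d|\le D}\min(M,\|\alpha d\|^{-1})\ll(D/q+1)(M+q\log q)$ with $D\asymp F(K)$ and $P<q\le R$. The term $D\log q$ can be as large as $F(K)\sL$ (since $q$ may lie near $R$), and after multiplying by $M\ll x/F(K)$ this contributes $\asymp x\sL$ to $|T|^2$. The target is $K^2x^2/(F(K)^2\sL^{8A})$, so you would need $F(K)^2\sL^{8A+1}\ll K^2x$. In case~(a) with $c>2$ and $F(K)$ near $x$ one has $K^2x\asymp x^{1+2/c}=o(x^2)$, and the inequality fails. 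Your proposed refinement $N(d)=O(1)$ does not rescue this: once you majorise by the full sum over $|d|\le D$ the multiplicity is irrelevant, and restricting to the actual difference set offers no usable structure for the reciprocal-sum lemma, which requires a complete interval. Moreover $N(d)=O(1)$ is simply false for $1<c<2$, by pigeonhole: there are $\asymp K^2>F(K)$ off-diagonal pairs but only $O(F(K))$ available differences. The root difficulty is that Cauchy--Schwarz discards the smoothness of $F$, leaving a sparse set $N_K$ whose difference set is ill-suited to the minor-arc machinery.

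The approach in \cite{BAK14}---a special case of which is worked out in the paper's own proof of Lemma~\ref{lem:5}---does not difference at all. One Fourier-expands the saw-tooth function to replace $e(\alpha m f(k))$ by a truncated sum $\sum_{|h|\le H}c_h(\alpha m)\,e\big((\alpha m+h)F(k)\big)$ plus a controllable error, and then applies Lemma~\ref{lem:3} directly to each $\sum_{k}e(\beta F(k))$ with $\beta=\alpha m+h$. The minor-arc hypothesis on $\alpha$ is used to control those $m$ for which the smallest available $|\beta|$ is too small for Lemma~\ref{lem:3}; it is not used through the $(D/q+1)(M+q\log q)$ device. You do sketch precisely this Fourier/Lemma~\ref{lem:3} mechanism in your final paragraph, but only as a fallback for case~(b); in fact it is the primary engine in both cases, and the Weyl-differencing detour should be abandoned.
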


\begin{proof}
This follows at once from \cite[Theorem 2.5]{BAK14}.
\end{proof}

\begin{lemma}
 \label{lem:5}
Make the hypotheses of Lemma \ref{lem:4} and suppose further that 
\[
 x\psL^{-B} \le F(K) \le x.
\]
Let $N(K,K_1,q,\ell)$ be the number of solutions to 
\[
 f(k) \equiv \ell \bmod q, 
  \quad 
   K < k \le K_1.
\]
Then for $1 < q \le \psL^{C_5}$, we have 
\[
 N(K,K_1,q,\ell) 
  =
   \frac{K_1 - K}{q} + O\big(K\psL^{-4A}\big).
\]
The implied constant depends on $c, A, B$ in case (a) and 
$\gamma, C_1, A, B$ in case (b).

\end{lemma}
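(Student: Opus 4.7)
My plan is to detect the congruence $f(k) \equiv \ell \pmod{q}$ by harmonic analysis on the circle. Since $f(k) = [F(k)]$, the condition $f(k) \equiv \ell \pmod q$ is equivalent to $\{(F(k) - \ell)/q\} \in [0, 1/q)$. For $H := [\psL^{4A}]$, I would approximate the indicator of $[0, 1/q) \bmod 1$ from above and below by Vaaler's trigonometric polynomials $V_H^{\pm}$ of degree $H$, which satisfy $\int_0^1 V_H^{\pm} = 1/q + O(1/H)$ and $|\hat V_H^{\pm}(h)| \ll \min(1/q, 1/h) + 1/H$ for $0 < |h| \le H$. Sandwiching $N(K,K_1,q,\ell)$ between the two sums and expanding the Fourier series then leads to
\[
 N(K,K_1,q,\ell) = \frac{K_1 - K}{q} + O\!\left(\frac{K}{H} + \sum_{h = 1}^{H}\Bigl(\min\Bigl(\tfrac{1}{q}, \tfrac{1}{h}\Bigr) + \tfrac{1}{H}\Bigr) \bigg|\sum_{K < k \le K_1} e\Bigl(\tfrac{h F(k)}{q}\Bigr)\bigg|\right).
\]
The $K/H$ term is already $O(K\psL^{-4A})$, so it remains to estimate the exponential sums uniformly in $h$ and $q$.

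I would apply Lemma~\ref{lem:3} to each inner sum with $\beta = h/q$, where $1 \le h \le H$ and $1 < q \le \psL^{C_5}$. The critical hypothesis to verify is~\eqref{eq:2.3}, namely $\beta F'(K) \ge 1/2$. In case~(a), $F'(K) = cK^{c - 1}$, and the hypothesis $F(K) \ge x\psL^{-B}$ gives $K^{c - 1} = F(K)/K \gg x^{(c - 1)/c}\psL^{-B/c}$, which vastly exceeds $q \le \psL^{C_5}$; thus $\beta F'(K) \ge F'(K)/q \gg 1$, while $\beta < N$ is trivial. Lemma~\ref{lem:3}(ii) then gives the bound $\ll K^{1 - c_4}$. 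In case~(b), $F'(K) = \gamma (\log K)^{\gamma - 1} F(K)/K$, and since $F(K)/K$ grows super-polynomially in $K$ (because $\gamma > 1$), again $\beta F'(K) \gg 1$; the bound $\beta < F(K)^{c_2}$ is trivial, and~\eqref{eq:2.5} is part of the hypotheses, so Lemma~\ref{lem:3}(i) yields $\ll K \exp(-c_3(\log K)^{3 - 2\gamma})$.

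Since $\sum_{h = 1}^{H} \min(1/q, 1/h) \ll \log\psL$ and $\sum_{h = 1}^H 1/H \le 1$, the total error contribution is $\ll K^{1 - c_4}\log\psL$ in case~(a) and $\ll K\exp(-c_3(\log K)^{3 - 2\gamma})\log\psL$ in case~(b). In case~(a), the constraint $C_7 \ge (8A + 2)/c_4$ from~\eqref{eq:2.4} combined with $K \ge \psL^{C_7}$ gives $K^{c_4} \ge \psL^{8A + 2}$, comfortably absorbing the $\log\psL$ loss and yielding $O(K\psL^{-4A})$. In case~(b), \eqref{eq:2.5} with $C_1 > 1/(3 - 2\gamma)$ forces $(\log K)^{3 - 2\gamma} > (\log\sL)^{C_1(3 - 2\gamma)}$ with $C_1(3 - 2\gamma) > 1$, so $\exp(-c_3(\log K)^{3 - 2\gamma})$ decays faster than any fixed negative power of $\psL$, again giving $O(K\psL^{-4A})$. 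I expect the main obstacle to be the uniform verification of~\eqref{eq:2.3} across the entire range $1 \le h \le \psL^{4A}$, $1 < q \le \psL^{C_5}$: this is exactly where the upper restriction $F(K) \le x$ coupled with $F(K) \ge x\psL^{-B}$ is indispensable in case~(a), and where the quantitative strength of~\eqref{eq:2.5} plays its role in case~(b).
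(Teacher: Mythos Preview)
Your argument is correct and lands on the same exponential sums $\sum_{K<k\le K_1}e(\beta F(k))$ that the paper bounds via Lemma~\ref{lem:3}, with the same verification of \eqref{eq:2.3} from $\beta\ge q^{-1}\ge\psL^{-C_5}$ and $F'(K)\gg F(K)/K\gg x\psL^{-B}/K$. The only difference is packaging: the paper first detects $f(k)\equiv\ell\bmod q$ by orthogonality of additive characters modulo $q$, obtaining sums $\sum_k e(af(k)/q)$ with $1\le a\le q-1$, and then replaces $f(k)=[F(k)]$ by $F(k)$ through a second Fourier expansion (the $T_1$/$T_2$ decomposition), ending with frequencies $h+a/q$ for $|h+a/q|\le\psL^{4A+1}$. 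Your single Vaaler step on $\{(F(k)-\ell)/q\}$ merges these two expansions into one, producing frequencies $h/q$ with $1\le h\le\psL^{4A}$. This is marginally cleaner---you avoid the intermediate $T_2$ term and the coefficients $c_h(a/q)$---but the saving is cosmetic: the frequency range, the lower bound $|\beta|\ge\psL^{-C_5}$, and the invocation of Lemma~\ref{lem:3} are identical in substance. Your endgame in case~(b) via \eqref{eq:2.5} is also valid, though the paper instead exploits the extra hypothesis $F(K)\ge x\psL^{-B}$ directly (giving $(\log K)^{\gamma}>\tfrac12\log x$), which yields the same super-polynomial decay.
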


\begin{proof}
We have 
\[
 N(K,K_1,q,\ell)
  =
   \sum_{K < k \le K_1}
    \frac{1}{q}
     \sum_{a = 1}^q 
      e\bigg(\frac{a(f(k) - \ell)}{q}\bigg).
\]
Separating the contribution from $a = q$, 
\begin{equation}
 \label{eq:2.6}
 N(K,K_1,q,\ell)
  -
   \frac{K_1 - K}{q}
    =
     \frac{1}{q}
      \sum_{a = 1}^{q - 1}
       e\bigg(-\frac{a\ell}{q}\bigg)
        \sum_{K < k \le K_1}        
         e\bigg(\frac{af(k)}{q}\bigg).
\end{equation}

The remainder of the proof is a variant of the proof of 
\cite[Theorem 2.5]{BAK14} in the case $M = 1/2$; we have, for 
$1 \le a < q$,
\begin{equation}
 \label{eq:2.7}
 \sum_{K < k \le K_1}
  e\bigg(\frac{af(k)}{q}\bigg)
   =
    T_1(\alpha) + O\big(T_2(\alpha)\big),
\end{equation}
where, with $H = \psL^{4A + 1}$,
\begin{align*}
 T_1(\alpha)
  & 
   =
    \sums[h \, \in \, \ZZ][|h + \frac{a}{q}| \le H]    
     c_h\bigg(\frac{a}{q}\bigg)
      \sum_{K < k \le K_1}
       e\bigg(\bigg(h + \frac{a}{q}\bigg)F(k)\bigg)
 \\
 T_2(\alpha)
  & 
   =
    \sum_{K < k \le K_1}
     \min\bigg(\sL, \frac{1}{H\|F(k)\|}\bigg).
\end{align*}
Here, 
\[
 c_h(\beta)
  \defeq 
   \frac{1 - e(-\beta)}{2\pi i (h + \beta)}.
\]
Just as in the proof of \cite[Lemma 2.4]{BAK14} we have 
\begin{align}
 \label{eq:2.8}
 T_2(\alpha)
  & 
  \ll
   K\psL^{-4A},
 \\
  \label{eq:2.9}
 T_1(\alpha)
  & 
   \ll
    \sums[h \in \ZZ][|h + \frac{a}{q}| \le H]
     \bigg|
      \sum_{K < k \le K_1}
       e\bigg(\bigg(h + \frac{a}{q}\bigg)F(k)\bigg) 
     \bigg|.
\end{align}
Note that $|h + \frac{a}{q}| < F(K)^{c_2}$, while 
\begin{equation}
 \label{eq:2.10}
 \bigg|h + \frac{a}{q}\bigg| 
  F'(2K)
   \ge 
    \frac{1}{2}.
\end{equation}
To see this, 
\begin{equation}
 \label{eq:2.11}
  \bigg|h + \frac{a}{q}\bigg|  
   \ge 
    \psL^{-C_5}, 
\end{equation}
while
\begin{equation}
 \label{eq:2.12}
 F'(2K) 
  \gg 
   \frac{F(K)}{K}
    \gg
     \begin{cases}
      F(K)^{1 - \frac{1}{c}} & \text{in case (a)} \\
      F(K)^{1/2}             & \text{in case (b),}
     \end{cases}
\end{equation}
and 
\begin{equation}
 \label{eq:2.13}
 F(K) 
  \gg
   x\psL^{-B}.
\end{equation}
Combining \eqref{eq:2.11}--\eqref{eq:2.13} yields \eqref{eq:2.10}.
We now use Lemma \ref{lem:3}, noting that, in case (b), 
\[
 \exp\big(-c_3(\log K)^{3 - 2\gamma}\big) 
  \ll 
   \psL^{-9A}
\]
since $(\log K)^{\gamma} > \frac{1}{2}\log x$.
This gives 
\[
 \sum_{K < k \le K_1}
  e\bigg(\bigg(h + \frac{a}{q}\bigg)F(k)\bigg)
   \ll 
    \psL^{-9A}
\]
and the lemma now follows from \eqref{eq:2.7}--\eqref{eq:2.9}.
\end{proof}

\begin{lemma}
 \label{lem:6}
Let 
\[
 c_q(h) 
  =
   \sums[a = 1][(a,q) = 1]^q
    e\bigg(\frac{ah}{q}\bigg)  
     \quad 
      \text{and}
       \quad 
        w_h(q) 
         =
          \frac{1}{q}
           \sum_{a = 1}^q c_q(ha).
\]
Then for squarefree $q$, 
\[
 w_h(q) 
  =
   \begin{cases}
    \phi(q) & \text{when $q \mid h$} \\
    0       & \text{when $q \nmid h$.}
   \end{cases}
\]
\end{lemma}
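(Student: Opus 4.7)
The asserted identity in fact holds without the squarefree hypothesis; squarefreeness plays no role in the argument and is kept only because that is the range in which the lemma is later applied.

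My plan is to substitute the definition of $c_q(ha)$ into the expression for $w_h(q)$ to obtain the double sum
\[
 w_h(q)
  =
   \frac{1}{q}
    \sum_{a = 1}^{q}
     \sums[b = 1][(b,q) = 1]^{q}
      e\bigg(\frac{abh}{q}\bigg),
\]
and then interchange the two summations. For each fixed $b$ with $(b,q) = 1$, multiplication by $b$ permutes the residues modulo $q$, so the inner sum over $a$ is a complete exponential sum over a full system of residues; by orthogonality it equals $q$ when $q \mid bh$ and $0$ otherwise.

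The final step is to observe that, because $(b,q) = 1$, the divisibility condition $q \mid bh$ is equivalent to $q \mid h$. Therefore, if $q \mid h$ every one of the $\phi(q)$ inner sums contributes $q$, giving $w_h(q) = \phi(q)$; and if $q \nmid h$ every inner sum vanishes, giving $w_h(q) = 0$. There is no genuine obstacle: the proof is a one-step application of character orthogonality together with the invariance of a complete residue system under multiplication by a unit.
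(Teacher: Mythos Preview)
Your argument is correct, and your remark that the squarefree hypothesis is superfluous is also correct. The paper does not supply its own proof here; it simply quotes the result as a special case of \cite[Lemma~4.2]{BW11}, which treats a more general weighted sum $w_h(q)$ attached to an arbitrary multiplicative density function. Your direct computation---interchanging the two sums and applying the orthogonality relation $\sum_{a=1}^{q} e(abh/q) = q\,\mathbf{1}_{q\mid bh}$, then using $(b,q)=1$ to reduce $q\mid bh$ to $q\mid h$---is exactly the elementary route one would take once the general machinery of \cite{BW11} is not needed. One small comment: the clause ``multiplication by $b$ permutes the residues modulo $q$'' is not actually required, since the inner sum over $a$ already ranges over a complete residue system; orthogonality applies immediately to $\sum_{a=1}^{q} e(abh/q)$ without any change of variable.
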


\begin{proof}
This is a special case of \cite[Lemma 4.2]{BW11}.
\end{proof}


\section{First stage of proof of Theorems \ref{thm:1} and \ref{thm:2}}
 \label{sec:3}

This section is similar to material in \cite{BAK14, BW11, MP05}, 
but there are enough differences to give the details.
Define $y_1$ by 
\[
 F(y_1) = x\psL^{-B}.
\]
We are concerned with values of $k$ satisfying 
\begin{equation}
 \label{eq:3.1}
 y_1 < k \le y, 
  \quad 
   \text{where}
    \quad 
     F(y) \le x.
\end{equation}
We note that 
\begin{equation}
 \label{eq:3.2}
 \frac{f(k)}{k}
  \le 
   \frac{F(k)}{k}
    \ll
     F'(k)
      \ll
       \frac{F(k)}{k}\sL
        \ll 
         \frac{f(k)}{k}\sL.
\end{equation}
Our objective is to evaluate 
\begin{equation}
 \label{eq:3.3}
 V_F'(x,y)
  \defeq 
   \sum_{y_1 < k \le y} 
    F'(k)
     \sums[\ell = 1][(\ell,f(k)) = 1]^{f(k)}
      E(x; f(k),\ell)^2
\end{equation}
asymptotically.
Let 
\[
 \theta(x;k,\ell)
  =
   \sums[p \le x][p \equiv \ell \bmod k] \log p,
\]
\begin{equation}
 \label{eq:3.4}
 \Phi_F(z,y)
  =
   \sum_{z < k \le y} 
    \frac{F'(k)}{\phi(f(k))}.
\end{equation}
Opening the square in \eqref{eq:3.3}, we find that 
\begin{equation}
 \label{eq:3.5}
 V_F'(x,y) 
  =
   S_1 - 2xS_2 + x^2\Phi_F(y_1,y),
\end{equation}
where 
\begin{align*}
 S_1
  & 
   =
    \sum_{y_1 < k \le y} 
     F'(k) 
      \sums[\ell = 1][(\ell,f(k)) = 1]^{f(k)}
       \theta(x;f(k),\ell)^2,
 \\
 S_2 
  & 
   =
    \sum_{y_1 < k \le y}
     \frac{F'(k)}{\phi(f(k))}
      \sums[\ell = 1][(\ell,f(k)) = 1]^{f(k)}
       \theta(x; f(k),\ell).
\end{align*}
Using the prime number theorem and the fact that $\ll \sL$ primes 
divide $f(k)$ ($k \le y$), we rewrite $S_2$ as 
\begin{align*}
 S_2
  & 
   =
    \sum_{y_1 < k \le y}
     \frac{F'(k)}{\phi(f(k))}
      \bigg(
       x + O\big(x\psL^{-3A}\big)
      \bigg)
 \\
  & 
   =
    x\Phi_F(y_1,y)
     + 
      O
      \bigg(
       x
        \psL^{-3A + 1}
         \sum_{y_1 < k \le y}
          \frac{f(k)}{k\phi(f(k))}
      \bigg)
\end{align*}
in view of \eqref{eq:3.2}.
Since $\phi(f(k)) \gg f(k)\psL^{-1}$, we find that 
\begin{equation}
 \label{eq:3.6}
 S_2 
  =
   x\Phi_F(y_1,y)
    +
     O\big(x\psL^{-3A + 3}\big).
\end{equation}

We may easily derive the relation 
\begin{equation}
 \label{eq:3.7}
  \sums[\ell = 1][(\ell,f(k)) = 1]^{f(k)}
  \theta(x;f(k),\ell)^2
   =
   \underset
    {
     p_1 \equiv p_2 \bmod f(k)
    }
    {
    \sums[p_1 \le x][p_1 \nmid f(k)]
     \sums[p_2 \le x][p_2 \nmid f(k)]
    }    
      (\log p_1) 
       (\log p_2).     
\end{equation}
The conditions $p_1 \mid f(k)$, $p_1 \equiv p_2 \bmod f(k)$ imply 
that $p_1 = p_2$.
We may accordingly ignore the constraint $p_j \nmid f(k)$ 
($j = 1,2$) when considering the off-diagonal terms. 
Consequently, 
\begin{equation}
 \label{eq:3.8}
  \sums[\ell = 1][(\ell,f(k)) = 1]^{f(k)}
  \theta(x; f(k), \ell)^2
   =
    \sums[p \le x][p \nmid f(k)] (\log p)^2 
     \hspace{3pt}
      + 
       \hspace{5pt} 
       2
        \hspace{-8pt}
         \sums[p_1 < p_2 \le x][p_1 \equiv p_2 \bmod f(k)] 
         (\log p_1)(\log p_2).
\end{equation}

We note the bounds 
\begin{equation}
 \label{eq:3.9}
 \sum_{k = k_0}^{k_1} F'(k - 1) 
  \le 
   \sum_{k = k_0}^{k_1} \big(F(k) - F(k - 1)\big)
    \le 
     \sum_{k = k_0}^{k_1} F'(k)     
\end{equation}
valid for any $k_0,k_1 \in \NN$, $k_0 \le k_1$.
It follows that
\[
 \sum_{y_1 < k \le y} F'(k)
  =
   F(y) + O\big(x\psL^{-2A}\big),
\]
\begin{equation}
 \label{eq:3.10}
 \sum_{y_1 < k \le y} F'(k) 
  \sum_{p \le x} (\log p)^2 
   =
    F(y) 
     \sum_{p \le x} (\log p)^2 
      +
       O\big(x^2\psL^{-A}\big).
\end{equation}
We let 
\[
 S_0 
  \defeq 
   \sum_{y_1 < k \le y} 
    F'(k) 
     \sums[p_1 < p_2 \le x][p_1 \equiv p_2 \bmod f(k)] 
      (\log p_1)(\log p_2).
\]
We deduce easily from \eqref{eq:3.8}, \eqref{eq:3.10} that 
\[
 S_1 
  = 
   2S_0 
    + 
     f(y) 
      \sum_{p \le x} (\log p)^2 
       +
        O\big(x^2\psL^{-A}\big).
\]
Combining this with \eqref{eq:3.5}, \eqref{eq:3.6}, we have 
\begin{equation}
 \label{eq:3.11}
 V_F'(x,y)
  = 
   2S_0 
    -
     x^2
      \Phi_F(y_1,y) 
       + 
        f(y)
         \sum_{p \le x} (\log p)^2
          +
           O\big(x^2\psL^{-A}\big).
\end{equation}

Let 
\[
 T(\alpha)
   =
    \sum_{y_1 < k \le y} F'(k) 
     \sum_{h \le x/f(k)} e\big(\alpha hf(k)\big),
 \quad 
 U(\alpha)
   =
    \sum_{p \le x} 
     (\log p)e(\alpha p).
\]
It is straightforward to verify that 
\begin{equation}
 \label{eq:3.12}
 S_0 
  =
   \int_0^1 T(\alpha) |U(\alpha)|^2 \dd \alpha.
\end{equation}
Let $P,R$ be as in Lemma \ref{lem:4}.
Define the major arcs $\gM$ to be the union of the pairwise 
disjoint intervals 
\[
 \bigg\{
         \alpha : 
          \bigg|\alpha - \frac{a}{q} \bigg| 
           \le \frac{1}{qR} 
 \bigg\} 
  \quad 
   (1 \le a \le q \le P, \, (a,q) = 1)
\]
and the minor arcs $\gm$ by 
\[
 \gm 
  =
   \bigg[\frac{1}{R}, 1 + \frac{1}{R} \bigg] \, \setminus \, \gM.
\]
A splitting-up argument gives 
\begin{equation}
 \label{eq:3.13}
 J_{\gm}
  \defeq 
   \int_{\gm} T(\alpha) |U(\alpha)|^2 \dd \alpha
    \ll 
     \psL^2 
      \int_{\gm} 
       |U(\alpha)| T^*(\alpha) \dd \alpha 
\end{equation}
where $T^*(\alpha)$ is the contribution to $T(\alpha)$ from 
$K < k \le K_1$ and $M < h \le 2M$.
Here $1/2 \le M \le x/f(K)$ while $y_1 \le F(K) < y$ and 
$K < K_1 \le 2K$.
Moreover, 
\begin{equation}
 \label{eq:3.14}
 \int_{\gm} |U(\alpha)|^2 \dd \alpha 
  \le 
   \int_{\frac{1}{R}}^{1 + \frac{1}{R}}
    |U(\alpha)|^2 \dd \alpha 
     =
      \sum_{p \le x}(\log p)^2 
       \ll
        x\sL.
\end{equation}

Now, 
\[
 T^*(\alpha)
  = 
   \int_K^{K_1} 
    F'(t) \dd S(t) 
\]
with 
\[
 S(t) 
  =
   \sum_{y_1 < k \le t} 
    \sums[h \le x/f(k)][M < h \le 2M] 
     e\big(\alpha h f(k)\big).
\]
Since $F''$ is monotonic, taking sup norms on $[K,K_1]$ we have 
\begin{align*}
 T^*(\alpha) 
 & 
 =
   \big[F'(t) S(t)\big]_K^{K_1} 
    -
     \int_{y_1}^y F''(t) S(t) \dd t 
 \\
  & 
      \ll 
       \|F'\|_{\infty}
        \|S\|_{\infty} 
         \ll
          \frac{K}{F(K)\psL^{4A}}
           \|F'\|_{\infty}
            \ll
             x\psL^{-3A},
\end{align*}
where we have used Lemma \ref{lem:4} and \eqref{eq:3.2} for the 
second last and last bounds respectively.
Combining this with \eqref{eq:3.13}, \eqref{eq:3.14} we have 
\begin{equation}
 \label{eq:3.15}
 J_{\gm}
  \ll
   x^2\psL^{-A}.
\end{equation}

We turn to the major arcs, beginning with 
\[
 J_{\gM} 
  \defeq 
   \int_{\gM} T(\alpha) |U(\alpha)|^2 \dd \alpha 
    =
     \sum_{q \le P}
      \sums[b = 1][(b,q) = 1]^q
       \int_{-\frac{1}{qR}}^{\frac{1}{qR}}
        \bigg|U\bigg(\frac{b}{q} + \beta\bigg)\bigg|^2
         T\bigg(\frac{b}{q} + \beta\bigg) 
          \dd \beta.
\]
Let 
\[
 v(\beta) 
  =
   \sum_{m \le x} e(m\beta).
\]
From Vaughan \cite[Lemma 3.1]{VAU97} we see that in the last 
integral, 
\[
 \bigg|U\bigg(\frac{b}{q} + \beta\bigg)\bigg|^2
  =
   \frac{\mu(q)^2}{\phi(q)}|v(\beta)|^2
    +
     O\big(x^2\exp\big(-c_5\psL^{1/2}\big)\big).
\]
Now 
\[
 T(\alpha) 
  \ll
   x
    \sum_{y_1 < k \le y} 
     \frac{F'(k)}{f(k)}
      \ll
       x
        \psL
         \sum_{y_1 < k \le y} 
          \frac{1}{k} 
           \ll 
            x\psL^2.
\]
Hence
\begin{align*}
 & 
 \bigg|U\bigg(\frac{b}{q} + \beta\bigg)\bigg|^2
  T\bigg(\frac{b}{q} + \beta\bigg) 
 \\
 & 
  \hspace{30pt}
  =
    \frac{\mu(q)^2}{\phi(q)}|v(\beta)|^2
     T\bigg(\frac{b}{q} + \beta\bigg) 
      +
       O\big(x^3 \psL^2\exp\big(-c_5\psL^{1/2}\big)\big),
\end{align*}
\begin{align*}
 J_{\gM}
 & 
  =
   \sum_{q \le P}
    \frac{\mu(q)^2}{\phi(q)^2}
     \sums[b = 1][(b,q) = 1]^q
      \int_{-\frac{1}{qR}}^{\frac{1}{qR}} 
       |v(\beta)|^2T\bigg(\frac{b}{q} + \beta\bigg) \dd \beta 
  \\
  & 
   \hspace{90pt}
    +
     O\big(x^2 \psL^{C_5 + C_6 + 2}\exp\big(-c_5\psL^{1/2}\big)\big).
\end{align*}
Since $v(\beta) \ll |\beta|^{-1}$ on $(-1/2,1/2)$ we extend the 
integral to this interval, introducing an error%
\begin{align*}
 & 
 \ll 
  \sum_{q \le P}
   \frac{\mu(q)}{\phi(q)}
    x 
     \psL^2
      \int_{\frac{1}{qR}}^{\frac{1}{2}} 
       \frac{1}{\beta^2}
        \dd \beta
         \ll
          x
           \psL^2 
            \sum_{q \le P} 
             \frac{\mu(q)^2}{\phi(q)}
              qR 
               \ll 
                x^2
                 \psL^{C_5 - C_6 + 3}
 \\
 & 
                  \ll
                   x^2
                    \psL^{-A}.
\end{align*}
This yields 
\begin{equation}
 \label{eq:3.16}
 J_{\gM}
  =
   \sum_{q \le P} 
    \frac{\mu(q)^2}{\phi(q)^2} 
     H(q)
      +
       O\big(x^2\psL^{-A}\big)
\end{equation}
where 
\[
 H(q)
  =
   \sums[b = 1][(b,q) = 1]^q
    \int_{-\frac{1}{2}}^{\frac{1}{2}} 
     |v(\beta)|^2 
      T\bigg(\frac{b}{q} + \beta\bigg) 
       \dd \beta.
\]

By orthogonality, 
\begin{align}
 \label{eq:3.17}
  \begin{split}
 H(q)
  & 
   =
    \sums[b = 1][(b,q) = 1]^q
     \sum_{y_1 < k \le y} 
      F'(k) 
       \sum_{h \le x/f(k)} 
        \underset%
         {
          n_1 - n_2 = hf(k)
         }
         {
          \sum_{n_1 \le x} 
           \sum_{n_2 \le x}
         }  
          e\bigg(\frac{bhf(k)}{q}\bigg)
 \\
 & 
  =
   \sum_{y_1 < k \le y}
    F'(k) 
     \sum_{h \le x/f(k)} 
      c_q\big(hf(k)\big)
       \big([x] - hf(k)\big).
  \end{split}
\end{align}
Replacing $[x]$ by $x$ introduces an error in \eqref{eq:3.17} of 
\[
 \ll
  \psL^{C_5} 
   \sum_{y_1 < k \le y} 
    F'(k) 
     \frac{x}{f(k)} 
      \ll
       x\psL^{C_5 + 2}
\]
by \eqref{eq:3.2}.
Combining this with \eqref{eq:3.12}, \eqref{eq:3.15} and 
\eqref{eq:3.16} we reach the expression 
\begin{equation}
 \label{eq:3.18}
  S_0 
   =
    M_0 
     + 
      O\big(x^2\psL^{-A}\big),
\end{equation}
where 
\[
 M_0
  =
   \sum_{q \le P}
    \frac{\mu(q)^2}{\phi(q)^2}
     \sum_{y_1 < k \le y}
      F'(k)
       \sum_{h \le x/f(k)} 
        c_q\big(h f(k)\big)
         (x - h f(k)).
\]
 

\section{Proof of Theorems \ref{thm:1} and \ref{thm:2}: second stage}
 \label{sec:4}
 
We first show that $M_0$ can be simplified to the form 
\begin{equation} 
 \label{eq:4.1} 
 M_0 
  =
   \sum_{h \le x/f(y_1)} 
    \frac{h}{\phi(h)} 
     \int_{f(y_1)}^{f(y(h))} 
      (x - ht) 
       \dd t 
        +
         O\big(x^2\psL^{-A}\big)       
\end{equation}
where $y(h)$ is defined by 
\[
 F(y(h)) 
  = 
   \min\bigg(F(y), \frac{x}{h}\bigg).
\]

Sorting the integers $k$ according to the value of $f(k) \bmod q$, 
\begin{equation}
 \label{eq:4.2}
  M_0 
   =
    \sum_{q \le P}
     \frac{\mu(q)^2}{\phi(q)^2}
      \sum_{h \le x/f(y_1)} 
       \sum_{b = 1}^q 
        c_q(hb)S_0\big(y(h),b\big),
\end{equation}
where 
\begin{equation}
 \label{eq:4.3} 
  S_0(z,b) 
   =
    \sums[y_1 < k \le z][f(k) \equiv b \bmod q] 
     F'(k)(x - hf(k)).
\end{equation}

Let 
\begin{equation}
 \label{eq:4.4} 
  S_1(z,b)
   =
    \sums[y_1 < k \le z][f(k) \equiv b \bmod q]
     F'(k)(x - hF(k)),
\end{equation}
then 
\begin{equation}
 \label{eq:4.5} 
  S_0(z,b) 
   -
    S_1(z,b)
   \ll 
    h
     \sum_{y_1 < k \le z}
      F'(k)
       \ll
        h
         F\big(y(h)\big)
          \ll 
           x
\end{equation}
for $y_1 < z \le y(h)$, by \eqref{eq:3.9}.
Now let 
\[
 S_2(z)
  =
   \sum_{y_1 < k \le z} 
    F'(k)(x - hF(k))
\]
and, in the notation of Lemma \ref{lem:5}, 
\[
 N_b(t) 
  =
   N(y_1,t,q,b)
    -
     \frac{1}{q} 
      \sum_{y_1 < n \le t} 1.
\]
We have, for $z \in (y_1,y(h)]$, 
\begin{align*}
 S_1(z,b) 
  -
   \frac{S_2(z)}{q}
    & 
     =
      \int_{y_1}^z 
       F'(t)(x - hF(t)) \dd N_b(t) 
 \\ 
  & 
   =
    \big[F'(t)(x - hF(t))N_b(t)\big]_{y_1}^{z}
 \\
 & 
  \hspace{60pt}
    -
      x\int_{y_1}^z N_b(t) F''(t) \dd t
       +
        h\int_{y_1}^z N_b(t)
         \frac{\dd}{\dd t}(F(t)F'(t))\dd t
 \\
  & 
   =
    T_1 - T_2 + T_3,
\end{align*}
say.

Now, 
\[
 N_b(t)
  \ll
   y(h) 
    \psL^{-3A}
\]
by Lemma \ref{lem:5} and a splitting-up argument.
Hence 
\begin{align}
 \label{eq:4.6} 
  T_1 
   & 
    \ll 
     x F'(y(h)) y(h)\psL^{-3A} \ll \frac{x^2}{h}\psL^{-2A}, 
  \\
 \label{eq:4.7} 
  T_2 
   & 
    \ll 
     x F'(y(h)) y(h)\psL^{-3A} \ll \frac{x^2}{h}\psL^{-2A},
  \\
 \label{eq:4.8} 
  \begin{split}
  T_3 
   & 
    \ll  
     h y(h) \psL^{-3A} F(y(h)) F'(y(h)) 
  \\
   & 
      \ll 
       x y(h) \psL^{-3A} F'(y(h)) 
        \ll
         \frac{x^2}{h} \psL^{-2A}.  
  \end{split}
\end{align}

Next we must estimate the difference 
\[
 D(z)
  \defeq 
   S_2(z) 
    -
     \int_{y_1}^z 
      F'(u) 
       (x - hF(u)) \dd u
\]
for $z \in (y_1,y(h)]$.
By Euler's formula, 
\begin{align*}
 & 
 |D(z)|
 \\
 & 
  \le 
   \int_{y_1}^z 
    \bigg| 
     \frac{\dd}{\dd u} \big(F'(u)(x - hF(u))\big)
    \bigg| 
     \dd u
      +
       \big|F'(y_1)(x - hF(y_1))\big|
        +
         \big|F'(z)(x - hF(z))\big|
 \\
 & 
  =
    U_1 + U_2 + U_3,
\end{align*}
say.
We have 
\begin{align}
 \label{eq:4.9}
  \begin{split}
   U_1 
    & 
     \ll 
      x
       \bigg| 
        \int_{y_1}^z 
         \frac{\dd}{\dd u}
          \big(F'(u)\big)
           \dd u
       \bigg|
        +
         h
          \bigg|
           \int_{y_1}^z 
            \frac{\dd}{\dd u} 
             \big(F'(u) F(u)\big) \dd u
          \bigg|
   \\
    & 
     \ll 
      x 
       F'(y(h)) 
        + 
         h
          F'(y(h)) 
           F(y(h))
   \\
    & 
     \ll 
      \frac{x F(y(h))\sL}{y_1}
   \\
   & 
       \ll 
        \frac{x^2\psL^{-2A}}{h},
  \end{split}
\end{align}
\begin{equation}
 \label{eq:4.10} 
  U_2 + U_3
   \ll
    xF'(y(h)) 
     \ll 
      \frac{x^2\psL^{-2A}}{h}.
\end{equation}

Assembling \eqref{eq:4.2}--\eqref{eq:4.10}, we obtain 
\begin{align}
 \label{eq:4.11}
  \begin{split}
   M_0
  & 
   =
     \sum_{q \le P}
      \frac{\mu(q)^2}{\phi(q)^2} 
       \sum_{h \le x/f(y_1)} 
        \sum_{b = 1}^q 
         c_q(hb) 
          \int_{y_1}^{y(h)} 
           F'(u)(x - hF(u)) \dd u
  \\
   & 
    \hspace{150pt}
     +
      O
       \bigg(
        \frac{x^2}{\psL^{2A}} 
         \sum_{q \le P}
          \frac{\mu(q)^2}{\phi(q)}
           \sum_{h \le 2\psL^B} 
            \frac{1}{h}
       \bigg).
  \end{split}
\end{align}
The error here is $O\big(x^2 \psL^{-A}\big)$.
Using a substitution in the integral, and applying 
Lemma \ref{lem:6}, the main term in \eqref{eq:4.11} is 
\[
 \sum_{q \le P}
  \frac{\mu(q)^2}{\phi(q)^2}
   \sums[h \le x/f(y_1)][q \mid h]
    \phi(q) 
     \int_{F(y_1)}^{F(y(h))} 
      (x - ht) \dd t.
\]
Since $P > \psL^B > h$ in this sum, by \eqref{eq:2.4}, we may 
rewrite the main term in the form 
\[
  \sum_{h \le x/f(y_1)} 
   \bigg\{ \sum_{q \mid h} \frac{\mu(q)^2}{\phi(q)} \bigg\}
    \int_{F(y_1)}^{F(y(h))} (x - ht) \dd t
   =
    \sum_{h \le x/f(y_1)} 
     \frac{h}{\phi(h)} 
      \int_{F(y_1)}^{F(y(h))} (x - ht) \dd t.
\]
Changing the limits of integration by an amount $O(1)$ incurs a 
further error 
\[
 \ll 
  x
   \sum_{h \le 2\psL^B} 
    \frac{h}{\phi(h)} 
     \ll 
      x^2 
       \psL^{-A}, 
\]
and this yields \eqref{eq:4.1}.

We simplify \eqref{eq:4.1} further using the formula 
\[
 \frac{\dd}{\dd t} 
  \big(t(x - {\textstyle \frac{1}{2}} ht)\big)
   =
    x - ht.
\]
For ease of comparison with \cite[Section 4]{BW11}, we write 
$W(h) = \frac{h}{\phi(h)}$.
The main term in \eqref{eq:4.1} is 
\[
 \sum_{h \le x/f(y)} 
  W(h)
   \bigg[
    t\bigg(x - \frac{1}{2}ht\bigg)
   \bigg]_{f(y_1)}^{f(y)}
    +
     \sum_{x/f(y) < h \le x/f(y_1)}
      W(h) 
       \bigg[
        t\bigg(x - \frac{1}{2}ht\bigg)
       \bigg]_{f(y_1)}^{x/h}.
\]
This can be rewritten as 
\begin{align*}
 & 
  \frac{f(y_1)^2}{2}
   \sum_{h \le x/f(y_1)} 
    \bigg\{ 
     \frac{W(h)}{h}
      \bigg(\frac{x}{f(y_1)}\bigg)^2 
       -
        2W(h) \frac{x}{f(y_1)} 
         + 
          W(h) h
    \bigg\}
  \\
   & 
    \hspace{60pt}
     -
      \frac{f(y)^2}{2} 
       \sum_{h \le x/f(y)} 
        \bigg\{ 
         \frac{W(h)}{h} 
          \bigg(\frac{x}{f(y)}\bigg)^2 
           - 
            2W(h) \frac{x}{f(y)}
             +
              W(h)h
        \bigg\}.
\end{align*}
Introducing the function 
\[
 \Theta(H)
  =
   \sum_{h \le H} 
    \frac{W(h)}{h}(H - h)^2,
\]
the main term in \eqref{eq:4.1} is 
\[
 \frac{1}{2}
  \bigg\{ 
   f(y_1)^2 \Theta\bigg(\frac{x}{f(y_1)}\bigg) 
    -
     f(y)^2 \Theta\bigg(\frac{x}{f(y)}\bigg)
  \bigg\}.
\]
Combining this with \eqref{eq:3.18}, we have 
\[
 2S_0 
  =
   f(y_1)^2 \Theta\bigg(\frac{x}{f(y_1)}\bigg) 
    -
     f(y)^2 \Theta\bigg(\frac{x}{f(y)}\bigg)
      +
       O\big(x^2\psL^{-A}\big).
\]

We have
\[
 \Theta(H) 
  = 
   \const 
    H^2\log H + 2\Gamma_0H^2 + H\log H + 2\Gamma_{-1}H + O(H^{1/2})
\]
from \cite[Section 5]{BW11} in the case 
$W(h) = \frac{h}{\phi(h)}$; here
\begin{equation}
 \label{eq:4.12}
  \const
   =
    \sum_{n = 1}^{\infty}
     \frac{\mu(n)^2}{n\phi(n)}.
\end{equation}
The constants $\Gamma_0, \Gamma_{-1}$ can be calculated 
explicitly; see \cite[p.\ 13]{BW11}.
Hence 
\begin{align}
 \label{eq:4.13}
  \begin{split}
  2S_0 
   & 
   =
    \const
     x^2 \log \bigg(\frac{f(y)}{f(y_1)}\bigg) 
     -
      x f(y)\log \bigg(\frac{x}{f(y)}\bigg)
       - 
        2\Gamma_{-1} x f(y)
  \\
   & 
    \hspace{150pt}
      + 
       O\big(x^{1/2} f(y)^{3/2} + x^2\psL^{-A}\big).
  \end{split}
\end{align}  
 

\section{Completion of the proof of Theorems \ref{thm:1} and \ref{thm:2}}
 \label{sec:5} 
 
We begin by using the identity 
\[
 \frac{q}{\phi(q)}
  =
   \sum_{r \mid q} 
    \frac{\mu(r)^2}{\phi(r)}
\]
to evaluate $\Phi_F(y_1,y)$ asymptotically.
For $y_1 < z \le y$, 
\begin{align}
 \label{eq:5.1}
  \begin{split}
  \sum_{y_1 < k \le z}
   \frac{f(k)}{\phi(f(k))} 
    & 
    =
     \sum_{r \le f(z)} \frac{\mu(r)^2}{\phi(r)}
      \sums[y_1 < k \le z][f(k) \equiv 0 \bmod r] 1
    \\
    & 
    =
     \sum_{r \le f(z)} 
      \frac{\mu(r)^2}{\phi(r)} 
       \bigg(
        \frac{z - y_1}{r}
         + 
          O\big(z \psL^{-4A}\big)
       \bigg)
    \\
    & 
     =
      \const 
       (z - y_1) 
        + 
         O\big(z \psL^{-3A}\big),
  \end{split}
\end{align}
where we have used Lemma \ref{lem:5} for the second last equality, 
and where $\const$ is the constant in \eqref{eq:4.12}.
Replacing $\frac{f(k)}{\phi(f(k))}$ by $\frac{F(k)}{\phi(f(k))}$ 
in \eqref{eq:5.1} introduces an error that is
\[
 \ll 
  \sum_{y_1 < k \le z} 
   \frac{1}{\phi(f(k))} 
    \ll
     \psL 
      \sum_{y_1 < k \le z}
       \frac{1}{f(k)} 
        \ll
         \psL 
          \sum_{y_1 < k \le z} 
           \frac{\psL^B}{x}
            \ll 
             \frac{\psL^{B + 1} z}{x}.
\]
Hence
\begin{equation}
 \label{eq:5.2}
  N(z)
   \defeq 
    \sum_{y_1 < k \le z}
     \frac{F(k)}{\phi(f(k))}
      -
       \const 
        (z - y_1) 
         \ll 
          z\psL^{-3A}.
\end{equation}

Now
\begin{align*}
 \Phi_F(y_1,y) 
  -
   \const
    \int_{y_1}^y \frac{F'(t)}{F(t)} \dd t 
 & 
   =
      \int_{y_1}^y \frac{F'(t)}{F(t)} \dd N(t) 
 \\
 & 
     =
        \bigg[\frac{F'(t)}{F(t)}N(t)\bigg]_{y_1}^y
         -
          \int_{y_1}^y N(t) 
           \frac{\dd}{\dd t} 
            \bigg(\frac{F'(t)}{F(t)}\bigg) 
             \dd t.          
\end{align*}
Since $F'/F$ is monotonic, we deduce from \eqref{eq:5.2} that, for 
some $w \in [y_1,y]$,
\[
 \Phi_F(x,y) 
  =
   \const
    \int_{y_1}^y \frac{F'(t)}{F(t)}
     +
      O\bigg(\frac{F'(w)}{F(w)}w \psL^{-3A}\bigg)
       =
        \const
         \log\bigg(\frac{F(y)}{F(y_1)}\bigg)
          +
           O\big(\psL^{-A}\big)
\]
(recalling \ref{eq:3.2}).
Noting that 
\begin{align*}
 \log \frac{F(y)}{F(y_1)}
  -
   \log \frac{f(y)}{f(y_1)}
 & 
   =
     \log\bigg(1 + \frac{F(y) - f(y)}{f(y)}\bigg)
      -
       \log\bigg(1 + \frac{F(y_1) - f(y_1)}{f(y_1)}\bigg)
 \\
 & 
  =
   O\bigg(\frac{1}{f(y_1)}\bigg)
    =
     O\big(\psL^{-A}\big)
\end{align*}
we have the more convenient expression 
\begin{equation}
 \label{eq:5.3}
 \Phi_F(y_1,y)
  =
   \const 
    \log\bigg(\frac{f(y)}{f(y_1)}\bigg)
     +
      O\big(\psL^{-A}\big).
\end{equation}

We now substitute \eqref{eq:4.13} and \eqref{eq:5.3} into the 
expression for $V_F'(x,y)$ obtained in \eqref{eq:3.11}.
This gives 
\begin{align*}
 V_F'(x,y)
  & 
   =
    x
     f(y)
      \log f(y) 
       +
        f(y)
         \bigg( 
          \sum_{p \le x} (\log p)^2 
           - 
            x
             \log x
         \bigg)
          -
           2
            \Gamma_{-1} 
             x
              f(y) 
 \\
  & 
   \hspace{180pt}
    +    
     O
      \big(
       x^{1/2}
        f(y)^{3/2}
         +
          x^2\psL^{-A}        
      \big).
\end{align*}
By the prime number theorem and partial summation, 
\[
 V_F'(x,y)
  =
   x
    f(y)
     \log f(y) 
      +
       C
        x
         f(y)
          +
           O
            \big(
             x^{1/2} 
              f(y)^{3/2}
               +
                x^2 
                 \psL^{-A}
            \big)
\]
with $C = -(2\Gamma_{-1} + 1)$.

The additional sum required to complete the proof of Theorems 
\ref{thm:1} and \ref{thm:2} is 
\[
 \sum_{Y < k \le y_1} 
  F'(k)
   \sums[\ell = 1][(\ell,f(k)) = 1]^{f(k)} 
    E(x; f(k), \ell)^2,
\]
where $Y = y_0(F)$ (Theorem \ref{thm:1}), 
$Y = \exp\big((\log \sL)^{C_1}\big)$ (Theorem \ref{thm:2}).
By a splitting-up argument it suffices to show that when 
$Y \le Q < F(y_1)$, we have
\begin{equation}
 \label{eq:5.4}
 \sum_{Q < [F(k)] \le 2Q} 
  F'(k) 
   \sums[\ell = 1][(\ell,f(k)) = 1]^{f(k)}
    E(x; f(k), \ell)^2 
     \ll
      x^2\psL^{-2A}.
\end{equation}
This is a straightforward consequence of Lemma \ref{lem:1} in the 
case of Theorem \ref{thm:1}.
In the case of Theorem \ref{thm:2}, let 
\[
 F(K) = Q, 
  \quad  
   F(K_1) = 2Q
\]
so that 
\[
 F'(k) 
  \asymp 
   (\log K)^{\gamma - 1} 
    K^{-1} 
     Q
      \quad 
       (k 
        \in 
         [K,K_1]).
\]
Now the mean value theorem yields 
\[
 \frac{|S_Q|}{Q} 
  \asymp 
   \frac{K_1 - K}{F(K_1) - F(K)}
    \asymp 
     \frac{K}{Q(\log K)^{\gamma - 1}}.
\]
The left-hand side of \eqref{eq:5.4} is 
\[
 \ll 
  \frac{(\log K)^{\gamma - 1}Q}{K}
   \sum_{q \, \in \, S_Q} 
     \sums[\ell = 1][(\ell,q) = 1]^q
     E(x; q, \ell)^2
      \ll 
       \frac{Q}{|S_Q|} 
        \sum_{q \, \in \, S_Q}
         \sums[\ell = 1][(\ell,q) = 1]^q
          E(x; q, \ell)^2
           \ll
            x^2 
             \psL^{-2A}  
\]
by Lemma \ref{lem:2}. 
This completes the proof of Theorems \ref{thm:1} and \ref{thm:2}.

The constant $C$ may be evaluated using material from 
\cite[Section 5]{BW11}, with the function $\rho(p)$ replaced by 
$1$ to yield the desired function $W(h) = h/\phi(h)$.
Let 
\[
 D(s) 
  =
   \zeta(s + 1)
    \zeta(s + 2)
     E_2(s), 
\]
where $\zeta$ is Riemann's zeta function and 
\begin{equation}
 \label{eq:5.5}
  E_2(s)
   =
    \prod_p
     \bigg( 
      1 + \frac{p^{-s}}{p^2(p - 1)} - \frac{p^{-2s}}{p^3(p - 1)}
     \bigg)
      \quad 
       (\Re(s) > -3/2).
\end{equation}
Then in the notation used just before \eqref{eq:4.12}, we have the 
residue formula 
\begin{equation}
 \label{eq:5.6}
  \text{Res}
   \bigg(
    \frac{D(s)H^{s + 2}}{s(s + 1)(s + 2)}, -1
   \bigg)
    =
     -
      \zeta(0) 
       H 
        \log H
         + 
          \Gamma_{-1}
           H.
\end{equation}
We use $E_2(-1) = 1$ (see \cite[p.\ 301]{BW11}).
We also need 
\begin{equation}
 \label{eq:5.7}
  E_2'(-1)
   =
    \sum_{p}
     \frac{\log p}{p(p - 1)},
\end{equation}
which can be obtained from \eqref{eq:5.5} by logarithmic 
differentiation.
Now we have the Laurent expansions near $-1$:
\begin{align*}
 G(s)
  & 
   \defeq 
    \frac{\zeta(s + 1)H^{s + 2}E_2(s)}{s(s + 2)}
     =
      -\zeta(0)
        H
         + 
          G'(-1)
           (s + 1)
            + 
             \cdots , 
 \\
  \frac{\zeta(s + 2)}{s + 1}
   & 
    =
     \frac{1}{(s + 1)^2} 
      (1 + \gamma_0(s + 1) + \cdots ).
\end{align*}
See Ivi\'c \cite[p.\ 4]{I03} for the coefficient $\gamma_0$ 
(Euler's constant) in the latter expansion.
We are led immediately to 
\begin{equation}
 \label{eq:5.8}
  \text{Res}
   \bigg(
    \frac{D(s)H^{s + 2}}{s(s + 1)(s + 2)}, -1
   \bigg)
    =
     -\gamma_0 
       \zeta(0) 
        H
         +
          G'(-1).
\end{equation}
A short calculation yields 
\begin{equation}
 \label{eq:5.9}
  G'(-1)
   =
    -
     \big\{
      \zeta'(0)
       + 
        \zeta(0) 
         E_2'(-1)
     \big\}
      H
       -
        \zeta(0)
         H
          \log H.
\end{equation}
Combining \eqref{eq:5.6}--\eqref{eq:5.9}, 
\[
 \Gamma_{-1}
  =
   \zeta(0) 
    \bigg(
     -\gamma_0 
      -
       \sum_{p}
        \frac{\log p}{p(p - 1)}
    \bigg)
     -
      \zeta'(0)
\]
and 
\begin{equation}
 \label{eq:5.10}
 C
  =
   -(2\Gamma_{-1} + 1)
    =
     2
      \zeta(0) 
    \bigg(
     \gamma_0 
      +
       \sum_{p}
        \frac{\log p}{p(p - 1)}
    \bigg)
     +
      2
       \zeta'(0)
        - 
         1.
\end{equation}


\end{document}